\title[Splitting subspaces and the Touchard-Riordan formula]{Splitting subspaces and a finite field interpretation of the Touchard-Riordan Formula}
\author{Amritanshu Prasad}
\address{The Institute of Mathematical Sciences, Chennai, India.}
\address{Homi Bhabha National Institute, Mumbai, India.}
\email{amri@imsc.res.in}
\author{Samrith Ram}
\address{Indraprastha Institute of Information Technology Delhi, New Delhi, India.}
\email{samrith@gmail.com}
\subjclass[2020]{05A15,05A19,33C45}
\keywords{Touchard-Riordan formula, splitting subspaces, finite fields, $q$-Hermite orthogonal polynomials, chord diagrams}
\newtheorem*{theorem*}{Main Theorem}
\newtheorem{lemma}{Lemma}
\newtheorem{corollary}[lemma]{Corollary}
\newtheorem{theorem}[lemma]{Theorem}
\newtheorem{proposition}[lemma]{Proposition}
\theoremstyle{remark}
\newtheorem{example}[lemma]{Example}
\newtheorem{remark}[lemma]{Remark}
\theoremstyle{definition}
\newtheorem{definition}[lemma]{Definition}
\newtheorem{claim}{Claim}
\numberwithin{equation}{section}
\numberwithin{lemma}{section}
\DeclareMathOperator\Irr{Irr}
\DeclareMathOperator\Par{Par}
\newcommand\Fq{\mathbf F_q}
\newcommand\qbin[2]{{#1\brack#2}_q}
\renewcommand\aa{\mathbf a}
\newcommand\bb{\mathbf b}
\newcommand\jj{\mathbf j}
\newcommand\kk{\mathbf k}
\newcommand\ZZ{\mathbf Z}
\begin{document}
\begin{abstract}
  We enumerate the number of $T$-splitting subspaces of dimension $m$ for an arbitrary operator $T$ on a $2m$-dimensional vector space over a finite field.
  When $T$ is regular split semisimple, comparison with an alternate method of enumeration leads to a new proof of the Touchard-Riordan formula for enumerating chord diagrams by their number of crossings.
\end{abstract}
\maketitle
\section{Introduction}
Let $\Fq$ denote a finite field of order $q$, and $m$ be a non-negative integer.
Given a positive integer $d$ and a linear operator $T:\Fq^{dm}\to \Fq^{dm}$, an $m$-dimensional subspace $W\subset \Fq^{dm}$ is said to be \emph{$T$-splitting} if
\begin{displaymath}
  W + TW + \dotsb + T^{d-1}W = \Fq^{dm}.
\end{displaymath}
This definition was proposed by Ghorpade and Ram \cite{m2}, motivated by the work of Niederreiter \cite{N2}.

The number of $T$-splitting subspaces is known when $T$ has an irreducible characteristic polynomial \cite{MR4263652,sscffa,m2}, is regular nilpotent \cite{agram2022}, is regular split semisimple \cite{fpsac,pr}, or when the invariant factors satisfy certain degree constraints \cite{polynomialmatrices}.
In this article, we consider the case where $d=2$.
Our main theorem gives a formula for the number of $T$-splitting subspaces of dimension $m$ for any $T\in M_{2m}(\Fq)$.
\begin{theorem*}
  For any linear operator $T:\Fq^{2m}\to \Fq^{2m}$, the number of $m$-dimensional $T$-splitting subspaces of $\Fq^{2m}$ is given by
  \begin{displaymath}
    \sigma^T = q^{\binom m2}\sum_{j=0}^{2m} (-1)^j X_j^T q^{\binom{m-j+1}2},
  \end{displaymath}
  where $X_j^T$ is the number of $j$-dimensional $T$-invariant subspaces of $\Fq^{2m}$.
\end{theorem*}
The quantities $X_j^T$ are easy to compute from the Jordan canonical form of $T$ with the help of a recursive formula of Ramar\'e \cite{MR3611779}.
For a detailed discussion see Section~\ref{sec:computation-x_j}.
When $T$ is regular split semisimple (i.e., it is similar to a diagonal matrix with distinct diagonal entries), $X_j^T=\binom{2m}j$, so the number of $T$-splitting subspaces is
\begin{displaymath}
  \sigma^T = q^{\binom m2}\sum_{j=0}^{2m} (-1)^j\binom{2m}jq^{\binom{m-j+1}2}.
\end{displaymath}
The sum above is the right hand side of the Touchard-Riordan formula
\begin{equation}
  \label{eq:touchard-riordan}
  (q-1)^m T_m(q) = \sum_{j=0}^{2m} (-1)^j\binom{2m}jq^{\binom{m-j+1}2}
\end{equation}
for the polynomial $T_m(q)$ that enumerates chord diagrams on $2m$ nodes according to their number of crossings (see Section~\ref{sec:chord-diags}).
This identity is attributed to Touchard \cite{MR46325} and Riordan \cite{MR366686}.
A proof using the theory of continued fractions was given by Read~\cite{MR556055}, and a bijective proof was given by Penaud~\cite{MR1336847}.
The polynomials $T_m(q)$ are moments of the $q$-Hermite orthogonal polynomial sequence \cite[Prop.~4.1]{MR930175}.
Several generalizations and variations of the Touchard-Riordan formula can be found in \cite{MR2737181,MR2721522,MR2799608,MR3033681,MR2819649}.

When $T\in M_{2m}(\Fq)$ is regular split semisimple, splitting subspaces can also be enumerated (see Theorem~\ref{theorem:split-splitting}) using the technique of \cite[Section~4.6]{pr} as 
\begin{displaymath}
  \sigma^T = q^{\binom m2}(q-1)^mT_m(q).
\end{displaymath}
This gives a completely new self-contained proof of the Touchard-Riordan formula.
Thus our main theorem could be viewed as a generalisation of the Touchard-Riordan formula in the setting of finite fields.

A software demonstration of our results using SageMath \cite{sagemath} can be found at \url{https://www.imsc.res.in/~amri/splitting_subspaces/}.
\section{Enumeration of Invariant Subspaces}
\label{sec:computation-x_j}
Each $T\in M_n(\Fq)$ gives rise to an $\Fq[t]$-module $M_T$ with underlying vector space $\Fq^n$ on which $t$ acts by $T$.
A subspace of $\Fq^n$ is $T$-invariant if and only if it is a submodule of $M_T$.
Let $\Par$ denote the set of all integer partitions and $\Irr \Fq[t]$ denote the set of all irreducible monic polynomials in $\Fq[t]$.
By the theory of elementary divisors (see \cite[Section~3.9]{MR780184} and \cite[Section~1]{MR72878}) there exists a unique function $c_T:\Irr\Fq[t]\to \Par$ such that
\begin{equation}
  \label{eq:primary}
  M_T = \bigoplus_{p\in \Irr\Fq[t]} M_{T_p},
\end{equation}
with the $p$-primary component $M_{T_p}$ having structure
\begin{equation}
  \label{eq:local_jcf}
  M_{T_p} = \bigoplus_i\Fq[t]/(p(t)^{c_T(p)_i})
\end{equation}
where $c_T(p)_1,c_T(p)_2,\dotsc$ are the parts of the partition $c_T(p)$.

Define the \emph{invariant subspace generating function} $f_T$ of $T\in M_n(\Fq)$ as
\begin{displaymath}
  f_T(t) = \sum_{j=0}^n X_j^T t^j,
\end{displaymath}
where $X_j^T$ is the number of $j$-dimensional $T$-invariant subspaces of $\Fq^n$.
Each $\Fq[t]$-submodule of $M_T$ is uniquely expressible as a direct sum of submodules of the primary submodules $M_{T_p}$.
Therefore,
\begin{displaymath}
  f_T(t) = \prod_{p\in \Irr\Fq[t]} f_{T_p}(t).
\end{displaymath}

For each $\lambda\in \Par$, let $f_\lambda(q;t)$ denote the invariant subspace generating function of the nilpotent matrix over $\Fq$ whose Jordan block sizes are the parts of $\lambda$.
A surprisingly simple recurrence of Ramar\'e~\cite[Theorem~3.1]{MR3611779} allows for easy computation of $f_\lambda(q;t)$:
\begin{equation}
  \label{eq:ramare}
  (t-1)f_\lambda(q;t) = t^{\lambda_1+1}q^{\sum_{j\geq 2}\lambda_j}f_{(\lambda_2,\lambda_3,\dotsc)}(q;t/q) - f_{(\lambda_2,\lambda_3,\dotsc)}(q;tq),
\end{equation}
where $\lambda_1,\lambda_2,\dotsc$ are the parts of $\lambda$ in weakly decreasing order.
The empty partition $\emptyset$ of $0$ can be used as the base case with $f_\emptyset(q;t)=1$.
The recurrence \eqref{eq:ramare} implies that $f_\lambda(q;t)$ is a polynomial in $q$ and $t$ with integer coefficients.

Since the rings $\Fq[t]/p(t)^d$ and $\mathbf F_{q^d}[u]/u^d$ are isomorphic, the invariant subspace generating function of $T\in M_n(\Fq)$ is given by
\begin{equation}
  \label{eq:fT}
  f_T(t) = \prod_{p\in \Irr\Fq[t]} f_{c_T(p)}(q^{\deg p};t^{\deg p}).
\end{equation}

It follows that the polynomial $f_T(t)$ depends on the polynomials $p\in \Irr\Fq[t]$ only through their degrees.
\begin{definition}
  \label{definition:similarity-class-type}
  A \emph{similarity class type} of size $n$ is a multiset $\tau$ of pairs of the form $(d,\lambda)$ where $d$ is a positive integer and $\lambda$ is a non-empty integer partition such that $\sum_{(d,\lambda)\in\tau} d|\lambda|=n$ (the sum is taken with multiplicity).
  The similarity class type of $T\in M_n(\Fq)$ is the similarity class of size $n$ given by
  \begin{displaymath}
    \{(\deg(p),c_T(p))\mid p\in \Irr\Fq[t],\;c_T(p)\neq \emptyset\}.
  \end{displaymath}
\end{definition}
\begin{remark}
  The set of similarity class types of size $n$ is independent of $q$.
  Green~\cite{MR72878} introduced similarity class types to organise conjugacy classes of $GL_n(\Fq)$ in a manner independent of $q$.
  This enabled him give a combinatorial description of the character table of $GL_n(\Fq)$ across all $q$.
  For a detailed discussion and a software implementation see \cite{simsage}.
\end{remark}
\begin{example}
  \begin{enumerate}
  \item An $n\times n$ scalar matrix has similarity class type $\{1,(1^n)\}$.
  \item
    A regular split semisimple $n\times n$ matrix has similarity class type\linebreak $\{(1,(1)),\dotsc,(1,(1))\}$ (with $n$ repetitions).
  \item A regular nilpotent $n\times n$ matrix has type $\{(1,(n))\}$.
  \item  An $n\times n$ matrix with irreducible characteristic polynomial has type $\{(n,(1))\}$.
  \end{enumerate}
\end{example}
\begin{theorem}
  \label{theorem:type-dependence}
  Given a similarity class type $\tau$ of size $n$ and $0\leq j\leq n$ let
    \begin{displaymath}
    f_\tau(u;t) = \prod_{(d,\lambda)\in \tau} f_\lambda(u^d;t^d).
  \end{displaymath}
  Then for any prime power $q$ and any matrix $T\in M_n(\Fq)$ with similarity class type $\tau$, $f_T(t)=f_\tau(q;t)$.
  In particular, for every $0\leq j\leq n$, there exists a polynomial $X^\tau_j(u)\in \ZZ[u]$ such that $X^T_j = X^\tau_j(q)$.
\end{theorem}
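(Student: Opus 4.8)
The plan is to read off both assertions from the product formula \eqref{eq:fT} together with Definition~\ref{definition:similarity-class-type}, with essentially no computation. Starting from
\[
  f_T(t) = \prod_{p\in\Irr\Fq[t]} f_{c_T(p)}(q^{\deg p};t^{\deg p}),
\]
I would first note that any factor with $c_T(p)=\emptyset$ equals $f_\emptyset(q^{\deg p};t^{\deg p})=1$, so the (a priori infinite) product reduces to a finite product over those $p$ with $c_T(p)\neq\emptyset$. By Definition~\ref{definition:similarity-class-type}, these $p$ are exactly the ones recorded, with multiplicity, by the similarity class type $\tau$: each contributes the pair $(\deg p,c_T(p))=(d,\lambda)\in\tau$ and the corresponding factor $f_\lambda(q^d;t^d)$.

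The factor $f_{c_T(p)}(q^{\deg p};t^{\deg p})$ depends on $p$ only through $(\deg p,c_T(p))$, and the multiset $\tau$ counts the contributing polynomials with the correct multiplicity, so reindexing the restricted product by $\tau$ gives
\[
  f_T(t) = \prod_{(d,\lambda)\in\tau} f_\lambda(q^d;t^d) = f_\tau(q;t),
\]
which is the first claim. For the integrality statement I would invoke Ramar\'e's recurrence \eqref{eq:ramare}, which shows $f_\lambda(u;t)\in\ZZ[u,t]$; substituting $u\mapsto u^d$, $t\mapsto t^d$ and taking a finite product keeps us in $\ZZ[u,t]$, so $f_\tau(u;t)\in\ZZ[u,t]$. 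Regarding $f_\tau(u;t)$ as a polynomial in $t$ and letting $X^\tau_j(u)\in\ZZ[u]$ be its coefficient of $t^j$, comparison of the $t^j$-coefficients in $f_T(t)=f_\tau(q;t)$ yields $X^T_j=X^\tau_j(q)$.

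The argument is pure bookkeeping, and the only step needing any care is the reindexing of the product over irreducible polynomials as a product over the multiset $\tau$ — that is, checking that each factor is determined by $(\deg p, c_T(p))$ and that multiplicities are preserved. This is immediate from the form of \eqref{eq:fT}, so I do not anticipate a real obstacle beyond this indexing check.
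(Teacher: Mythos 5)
Your proof is correct and follows exactly the paper's route: the paper's own proof is the one-line observation that the theorem follows from Eqns.~\eqref{eq:ramare} and \eqref{eq:fT}, and your argument is simply a careful write-up of that, reindexing the product in \eqref{eq:fT} over the multiset $\tau$ and using Ramar\'e's recurrence for integrality of the coefficients.
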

\begin{proof}
  The theorem follows from Eqns.~(\ref{eq:ramare}) and (\ref{eq:fT}).
\end{proof}
The polynomial $f_\lambda(q;t)$ is known to have non-negative coefficients \cite{MR1223236}, hence $X_j^\tau(q)$ also has non-negative coefficients.
\begin{example}
  \label{example:taui}
  Let $\tau_i=\{(1,(1^{m+i})),(m-i,(1))\}$ for $i=1,\dotsc,m$.
  Then
  \begin{displaymath}
    f_{\tau_i}(t) = \left(\sum_{k=0}^{m+i}\qbin{m+i}k t^k\right)(1+t^{m-i}).
  \end{displaymath}
  Consequently,
  \begin{equation}
    \label{eq:taui}
    X^{\tau_i}_j(q) = \qbin{m+i}j + \qbin{m+i}{j-m+i}.
  \end{equation}
\end{example}
\section{The Existence of a Formula}
\label{sec:existence}
In this section we establish the existence of a formula for the number $\sigma^T$ of $m$-dimensional $T$-splitting subspaces of $\Fq^{2m}$ in terms of $X_j^T$, $j=0,\dotsc,m$ (Corollary~\ref{cor:ajxj}).
The main step is Proposition~\ref{prop:recurrence}, which is a special case of a more general recurrence of Chen and Tseng \cite[Lemma~2.7]{sscffa}.

Given a positive integer $n$, and $0\leq a\leq n$, let $\aa$ denote the set of $a$-dimensional subspaces of $\Fq^n$.
Given a linear operator $T:\Fq^n\to \Fq^n$ and sets $X$ and $Y$ of subspaces of $\Fq^n$, define
\begin{align*}
  (X,Y)_T&:=\{W \in X\mid W\cap T^{-1}W\in Y\}\\
  [X,Y]_T&:=\{(W_1,W_2)\mid W_1\in X, W_2\in Y,\text{ and } W_1\cap T^{-1}W_1\supset W_2\}.
\end{align*}
Thus $(\aa,\bb)_T$ denotes the set of $a$-dimensional subspaces $W$ such that $W\cap T^{-1}W$ has dimension $b$.
We drop the subscript $T$ from the notation when the operator is clear from the context.
\begin{example}
  For each $0\leq a\leq n$, $(\aa,\aa)$ denotes the set of $a$-dimensional $T$-invariant subspaces of $\Fq^n$.
  Hence $|(\aa,\aa)_T|=X_a^T$.
\end{example}
\begin{example}
  If $n=2m$, then $(\mathbf m,\mathbf 0)_T$ is the set of $m$-dimensional $T$\nobreakdash-splitting subspaces of $\Fq^{2m}$.
\end{example}
\begin{proposition}
  \label{prop:recurrence}
  Let $T:\Fq^n\to \Fq^n$ be a linear map.
  For all integers $n\geq a>b\geq 0$, we have
  \begin{align*}
    |(\aa,\bb)| & = X_b^T\qbin{n-b}{a-b}-X_a^T\qbin ab\\
                & + \sum_{j=0}^{b-1}|(\bb,\jj)|{n-2b+j \brack a-2b+j}_q-\sum_{k=b+1}^{a-1}|(\aa,\kk)|{k \brack b}_q.
  \end{align*}
\end{proposition}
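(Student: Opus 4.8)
The plan is to evaluate $|[\aa,\bb]|$, the number of pairs $(W,U)$ in which $W$ is an $a$-dimensional subspace, $U$ is a $b$-dimensional subspace, and $U\subseteq W\cap T^{-1}W$, in two different ways and equate the results. Observe first that the condition $U\subseteq W\cap T^{-1}W$ says precisely that $U\subseteq W$ and $U\subseteq T^{-1}W$, equivalently $U+TU\subseteq W$.

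First I would fix $W$. If $W$ lies in $(\aa,\kk)$, so that $\dim(W\cap T^{-1}W)=k$, then the number of $b$-dimensional subspaces $U\subseteq W\cap T^{-1}W$ is $\qbin kb$. Since $W\cap T^{-1}W$ has dimension between $b$ and $a$ whenever it contains a $b$-dimensional subspace, grouping by $k$ yields
\begin{displaymath}
  |[\aa,\bb]| = \sum_{k=b}^a |(\aa,\kk)|\qbin kb.
\end{displaymath}

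Next I would fix $U$ and count the $a$-dimensional $W$ with $U+TU\subseteq W$, which requires knowing $\dim(U+TU)$. The key step is the identity $\dim(U+TU)=2b-j$, where $j=\dim(U\cap T^{-1}U)$: the linear map $U\to\Fq^n/U$ sending $u\mapsto Tu+U$ has kernel $U\cap T^{-1}U$ and image $(U+TU)/U$, so by rank-nullity $b-j=\dim(U+TU)-b$. Hence for $U\in(\bb,\jj)$ the number of admissible $W$ is $\qbin{n-2b+j}{a-2b+j}$, a quantity that vanishes automatically when $2b-j>a$, and grouping by $j$ yields
\begin{displaymath}
  |[\aa,\bb]| = \sum_{j=0}^b |(\bb,\jj)|\qbin{n-2b+j}{a-2b+j}.
\end{displaymath}

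Finally I would equate the two expressions and isolate $|(\aa,\bb)|$. On the left the $k=b$ term is $|(\aa,\bb)|$ since $\qbin bb=1$, and the $k=a$ term is $X_a^T\qbin ab$ because $(\aa,\aa)$ is the set of $a$-dimensional $T$-invariant subspaces; on the right the $j=b$ term is $X_b^T\qbin{n-b}{a-b}$ since $(\bb,\bb)$ is the set of $b$-dimensional $T$-invariant subspaces. Transposing every term other than $|(\aa,\bb)|$ to the right-hand side gives exactly the stated formula. The only non-routine ingredient is the dimension identity $\dim(U+TU)=2b-\dim(U\cap T^{-1}U)$ that powers the second count; the remaining steps are the standard Gaussian-binomial counts of subspaces contained in, or containing, a fixed subspace, together with reindexing of the two resulting sums.
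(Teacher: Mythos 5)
Your proposal is correct and follows essentially the same route as the paper: both double-count the set $[\aa,\bb]$ of pairs $(W,U)$ with $U\subseteq W\cap T^{-1}W$, once by fixing $W$ (grouping over $(\aa,\kk)$) and once by fixing $U$ (grouping over $(\bb,\jj)$), then equate and isolate $|(\aa,\bb)|$. The only difference is cosmetic: you spell out the rank-nullity computation $\dim(U+TU)=2b-j$, which the paper uses implicitly in writing the coefficient $\qbin{n-(2b-j)}{a-(2b-j)}$.
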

\begin{proof}
  Since $\aa = \coprod_{0\leq a\leq k} (\aa,\kk)$, we have
  \begin{align*}
    [\aa,\bb]=\coprod_{b\leq k\leq a}[(\aa,\kk),\bb].
  \end{align*}
  It follows that
  \begin{align}
    |[\aa,\bb]|&=\sum_{k=b}^a |[(\aa,\kk),\bb]|\nonumber \\
           &=\sum_{k=b}^a|(\aa,\kk)|{k \brack b}_q\nonumber\\
     &=|(\aa,\bb)|+\sum_{k=b+1}^a|(\aa,\kk)|{k \brack b}_q. \label{eq:1}
  \end{align}
  Similarly,
  \begin{align*}
    [\aa,\bb]=\coprod_{0\leq j\leq b}[\aa,(\bb,\jj)],
  \end{align*}
  so that
  \begin{align}
    |[\aa,\bb]|&=\sum_{j=0}^b |[\aa,(\bb,\jj)]|\nonumber \\
    &=\sum_{j=0}^b|(\bb,\jj)|{n-(2b-j) \brack a-(2b-j)}_q. \label{eq:2}
  \end{align}
The proposition follows from Eqs.~\eqref{eq:1} and \eqref{eq:2}, and $|(\aa,\aa)|=X_a^T$.
\end{proof}
\begin{proposition} 
  For all integers integer $n\geq a\geq b\geq 0$, there exist polynomials $p_0(t),\dotsc,p_a(t)\in \ZZ[t]$ such that, for every prime power $q$, and every linear map $T:\Fq^n\to \Fq^n$,
  \begin{align*}
    |(\aa,\bb)_T|=\sum_{j=0}^a p_j(q)X_j^T.
  \end{align*}
\end{proposition}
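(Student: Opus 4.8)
The plan is to prove the statement by induction, using the recurrence of Proposition~\ref{prop:recurrence} as the engine. The key structural observation is that for $a>b$ the recurrence writes $|(\aa,\bb)_T|$ as a $\ZZ[q]$-linear combination of $X_a^T$ and $X_b^T$ together with the quantities $|(\bb,\jj)_T|$ for $0\le j<b$ and $|(\aa,\kk)_T|$ for $b<k<a$. Since every Gaussian binomial $\qbin{\cdot}{\cdot}$ occurring there is a fixed polynomial in $t$ with integer coefficients (specialised at $q$), I would carry out the whole computation symbolically in $t$: each $q$-binomial is read as the corresponding element of $\ZZ[t]$, and $q$ is treated merely as the value at which these polynomials are evaluated. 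The point to keep in view is that the polynomials $p_j$ must be \emph{uniform} in $q$ and $T$; this is automatic because the recurrence and the inductive identities hold for every prime power $q$ and every $T$, so substituting one into the other produces a single numerical identity valid across all $q$ and $T$, with fixed coefficients in $\ZZ[t]$.

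Concretely, I would induct on the first index $a$, and for each fixed $a$ on the gap $a-b$ --- that is, lexicographically on the pair $(a,\,a-b)$. The base case is the diagonal $a=b$, where $|(\aa,\aa)_T|=X_a^T$ as recorded in the Example preceding the statement, so one takes $p_a(t)=1$ and $p_j(t)=0$ for $j\neq a$. For the inductive step with $a>b$ I would invoke Proposition~\ref{prop:recurrence}. The terms $|(\bb,\jj)_T|$ with $j<b$ have first index $b<a$, so the primary induction hypothesis expresses each of them as a $\ZZ[t]$-combination of $X_0^T,\dotsc,X_b^T$. The terms $|(\aa,\kk)_T|$ with $b<k<a$ retain the first index $a$ but have strictly smaller gap $a-k<a-b$, so the secondary induction hypothesis expresses each as a $\ZZ[t]$-combination of $X_0^T,\dotsc,X_a^T$. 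Substituting these expressions into the recurrence, multiplying by the relevant Gaussian polynomials, and collecting the coefficient of each $X_j^T$ for $0\le j\le a$ then yields the required polynomials $p_0(t),\dotsc,p_a(t)\in\ZZ[t]$; integrality is preserved throughout since Gaussian binomials, sums, and products of integer polynomials stay in $\ZZ[t]$.

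The step I expect to be the main obstacle --- really the only nontrivial point --- is verifying the well-foundedness of this ordering, i.e.\ that \emph{every} term on the right-hand side of the recurrence is strictly smaller than $(\aa,\bb)$ in the lexicographic order. The two reductions are of different flavours and must be checked separately: the $|(\bb,\jj)_T|$ terms decrease the first index (from $a$ to $b<a$) while possibly having a large gap $b-j$, whereas the $|(\aa,\kk)_T|$ terms keep the first index fixed and only decrease the gap. Since the recurrence requires $a>b$ and its inner sums run over $j\le b-1$ and $k\le a-1$, no term with first index $a$ and gap $a-b$ reappears on the right, so the lexicographic measure strictly decreases in both cases and the induction closes. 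I would also note in passing that all Gaussian binomials that arise (including those that vanish by convention when the lower index exceeds the upper or is negative) are legitimately elements of $\ZZ[t]$, so no special casing is needed.
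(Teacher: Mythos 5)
Your proof is correct and is essentially the paper's own argument made explicit: the paper likewise invokes Proposition~\ref{prop:recurrence} repeatedly, observing that every term on the right has either a smaller first index, or the same first index and a strictly smaller gap $a-b$ --- exactly your lexicographic measure on $(a,\,a-b)$ --- with coefficients that are polynomials in $q$ independent of $T$. Your version simply spells out the base case $|(\aa,\aa)_T|=X_a^T$ and the well-foundedness that the paper leaves implicit in the phrase ``in finitely many steps.''
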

\begin{proof}
  Proposition~\ref{prop:recurrence} expands $|(\aa,\bb)_T|$ in terms of $X_a^T$, $X_b^T$, and $|(\aa',\bb')_T|$ where either $a'<a$, or $a'=a$ and $a'-b'<a-b$.
  The coefficients are polynomials in $q$ that are independent of $T$.
  Thus repeated application of Proposition~\ref{prop:recurrence} will result in an expression of the stated form in finitely many steps.
\end{proof}
\begin{corollary}
  \label{cor:ajxj}
  For each non-negative integer $m$, there exist\linebreak polynomials $p_0(t),\dotsc,p_m(t)\in \ZZ[t]$ such that, for every linear map $T:\Fq^{2m}\to \Fq^{2m}$, the number of $m$-dimensional $T$-splitting subspaces is given by
  \begin{equation}
    \label{eq:ajxj} 
    \sigma^T = \sum_{j=0}^m p_j(q)X_j^T.
  \end{equation}
\end{corollary}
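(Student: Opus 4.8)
The plan is to obtain the corollary as an immediate specialization of the preceding proposition. First I would recall that the $m$-dimensional $T$-splitting subspaces of $\Fq^{2m}$ are precisely the elements of $(\mathbf m,\mathbf 0)_T$, so that $\sigma^T=|(\mathbf m,\mathbf 0)_T|$. This reduces the claim to expressing this single cardinality as a $\ZZ[t]$-linear combination of the $X_j^T$ with coefficients independent of $T$ and of $q$.

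Next I would invoke the preceding proposition with the substitution $n=2m$, $a=m$, and $b=0$. These values satisfy the hypothesis $n\geq a\geq b\geq 0$, so the proposition furnishes polynomials $p_0(t),\dotsc,p_m(t)\in\ZZ[t]$ — note that the upper index is $a=m$, which is exactly the range $0,\dotsc,m$ demanded by the corollary — such that for every prime power $q$ and every $T\colon\Fq^{2m}\to\Fq^{2m}$,
\begin{displaymath}
  |(\mathbf m,\mathbf 0)_T|=\sum_{j=0}^m p_j(q)X_j^T.
\end{displaymath}
Combining this with the identification $\sigma^T=|(\mathbf m,\mathbf 0)_T|$ yields the stated expression, with the same polynomials working uniformly across all $q$.

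I do not expect any genuine obstacle here, since all of the substantive content has already been carried by Proposition~\ref{prop:recurrence} and the proposition deduced from it: the former reduces any $|(\aa,\bb)_T|$ to simpler terms with strictly smaller $a$, or with $a$ unchanged and $a-b$ smaller, while the latter packages the finitely many resulting steps into a $\ZZ[t]$-linear combination of the $X_j^T$. The only point worth checking is the bookkeeping — that specializing $a=m$ produces coefficient polynomials indexed precisely by $j=0,\dotsc,m$ and no higher — and this is immediate from the statement of the preceding proposition. Thus the corollary is simply the $(n,a,b)=(2m,m,0)$ instance of that result.
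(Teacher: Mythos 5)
Your proposal is correct and matches the paper's intent exactly: the corollary is stated without proof precisely because it is the specialization $(n,a,b)=(2m,m,0)$ of the preceding proposition, combined with the paper's earlier observation that $(\mathbf m,\mathbf 0)_T$ is the set of $m$-dimensional $T$-splitting subspaces. Your bookkeeping check that the coefficient polynomials are indexed by $j=0,\dotsc,a=m$ is the only point needing verification, and you handled it.
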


\section{Proof of the Main Theorem}
\label{sec:proof-main-theorem}
By Theorem~\ref{theorem:type-dependence} and Corollary~\ref{cor:ajxj}, for every similarity class type $\tau$ of size $2m$, there exists $\sigma^\tau(u)\in \ZZ[u]$ such that, for every prime power $q$ and every $T\in M_{2m}(\Fq)$ of type $\tau$, $\sigma^T = \sigma^\tau(q)$.
Thus the main theorem can be rephrased as follows.
 \begin{theorem}
   \label{th:main} 
  For each similarity class type $\tau$ of size $2m$,
  \begin{equation}
    \label{eq:sigma-tau}
    \sigma^{\tau}(q)=q^{m \choose 2}   \sum_{j=0}^{2m} (-1)^jX_j^\tau(q)q^{\binom{m-j+1}2}.
  \end{equation}
\end{theorem}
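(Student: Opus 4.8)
The plan is to combine Corollary~\ref{cor:ajxj}, which already gives $\sigma^\tau(q)=\sum_{j=0}^{m}p_j(q)X_j^\tau(q)$ for universal polynomials $p_0,\dots,p_m\in\ZZ[q]$ not depending on $\tau$, with a duality that brings the right-hand side of \eqref{eq:sigma-tau} into the same range $0\le j\le m$. Once both sides are written as fixed $\ZZ[q]$-linear combinations of $(X_0^\tau,\dots,X_m^\tau)$, the theorem becomes an identity between two universal linear functionals, and it suffices to verify \eqref{eq:sigma-tau} on $m+1$ similarity class types whose $(m+1)\times(m+1)$ matrix of $X$-values has nonzero determinant in $\ZZ[q]$.

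First I would record the duality $X_j^\tau(q)=X_{2m-j}^\tau(q)$, which holds because every matrix is similar to its transpose and the annihilator map is an inclusion-reversing bijection between the $j$-dimensional $T$-invariant subspaces and the $(2m-j)$-dimensional $T^{t}$-invariant subspaces. Using this and $\binom k2=\binom{1-k}2$, folding the sum in \eqref{eq:sigma-tau} about $j=m$ collapses it to $\sigma^\tau=q^{\binom m2}\bigl[(-1)^mX_m^\tau+\sum_{j=0}^{m-1}(-1)^jX_j^\tau\bigl(q^{\binom{m-j+1}2}+q^{\binom{m-j}2}\bigr)\bigr]$, which provides explicit candidate coefficients $q_0,\dots,q_m$ to match against the $p_j$.

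Next I would assemble the test types. The family $\tau_i$ of Example~\ref{example:taui} supplies $m$ of them, all with $\sigma^{\tau_i}=0$: the $(m+i)$-dimensional primary component on which $T$ acts as a scalar meets every $m$-dimensional subspace $W$ in dimension at least $i>0$, and this intersection is a nonzero $T$-invariant subspace lying in $W\cap TW$, so $W$ cannot be splitting. That the candidate formula also vanishes on each $\tau_i$ is a $q$-binomial identity: substituting $X_j^{\tau_i}=\qbin{m+i}j+\qbin{m+i}{j-m+i}$ and using $\binom{m-j+1}2=\binom{m+1}2+\binom j2-mj$, the two resulting sums are instances of the finite $q$-binomial theorem $\sum_j(-1)^j\qbin Nj q^{\binom j2}z^j=\prod_{l=0}^{N-1}(1-zq^l)$, with $N=m+i$ and $z=q^{-m}$ (respectively $z=q^{-i}$), and each product carries a vanishing factor $1-q^{0}$. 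For the $(m+1)$-st type I would take the one with irreducible characteristic polynomial, $\{(2m,(1))\}$, where $X_j^\tau=0$ for $0<j<2m$, so that $\sigma^\tau=p_0(q)$ is the known splitting count in the irreducible case \cite{MR4263652,sscffa,m2}; one checks that this equals the candidate value $q^{m^2}+q^{m^2-m}$.

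Finally I would check that the $(m+1)\times(m+1)$ matrix of $X$-values for these types is nonsingular over $\ZZ[q]$; the irreducible type contributes the row $(1,0,\dots,0)$, and the remaining $m\times m$ block built from the $q$-binomial coefficients of Example~\ref{example:taui} should be nondegenerate by a leading-degree-in-$q$ argument (for small $m$ its determinant factors into cyclotomic-type pieces and is visibly nonzero). Granting this, the functionals $\sum_jp_jX_j^\tau$ and $\sum_jq_jX_j^\tau$ agree on a spanning set, forcing $p_j=q_j$ for all $j$ and hence \eqref{eq:sigma-tau} for every type $\tau$. I expect the main obstacle to be exactly this nonsingularity for general $m$, alongside the careful bookkeeping in the $q$-binomial verification; the geometric vanishing $\sigma^{\tau_i}=0$ and the transpose duality are comparatively routine.
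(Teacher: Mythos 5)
Your strategy coincides with the paper's own proof almost step for step: fold the sum via the duality $X_j^\tau=X_{2m-j}^\tau$, test the resulting linear identity on the types $\tau_0=\{(2m,(1))\}$ and $\tau_i=\{(1,(1^{m+i})),(m-i,(1))\}$ of Example~\ref{example:taui}, and invert the system. Your verification of Claim~\ref{claim:1} is essentially correct: your two $q$-binomial specializations $z=q^{-m}$ and $z=q^{-i}$ are exactly the $k=0$ and $k=m-i$ cases of the paper's Lemma~\ref{lemma:vanishing-q_bins}, and your value $q^{m^2}+q^{m^2-m}$ for the irreducible type agrees with $q^{\binom m2}(q^{\binom{m+1}2}+q^{\binom m2})$. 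One small slip: for $\tau_i$ the scalar eigenvalue $\lambda$ of the $(m+i)$-dimensional primary component may be $0$, in which case a vector $v\in W$ with $Tv=0$ need not lie in $TW$, so your claim that $W$ meets the eigenspace inside $W\cap TW$ fails. This is easily repaired: either note that $T|_W$ is then non-injective so $\dim(W+TW)<2m$, or choose a representative of the type with $\lambda\neq 0$ (legitimate, since $\sigma^T$ depends only on the type), or use the paper's rank argument that a generating set of the primary module needs at least $m+i>m$ elements.

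The genuine gap is the nonsingularity of the matrix $(X^{\tau_i}_j(q))_{0\leq i,j\leq m}$, which you explicitly defer ("should be nondegenerate by a leading-degree-in-$q$ argument") and correctly identify as the main obstacle; checking small $m$ and hoping for cyclotomic factorizations is not a proof, and without this step the whole interpolation argument collapses. The paper closes it in Proposition~\ref{prop:cofactor} as follows. From \eqref{eq:taui} and $\deg\qbin nk=(n-k)k$ one gets $a_{ij}:=\deg X^{\tau_i}_j(q)=\max\{j(m+i-j),\,(j-m+i)(2m-j)\}=j(m+i-j)$, since the difference of the two candidates is $2(m-i)(m-j)\geq 0$. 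The key point is then a Monge-type uniqueness statement (Lemma~\ref{lem:maxperm}): if $a_{ik}-a_{ij}<a_{kk}-a_{kj}$ whenever $i<k$ and $j<k$, then the identity is the \emph{unique} permutation maximizing $\sum_i a_{i\sigma(i)}$; here indeed $a_{ik}-a_{ij}=(k-j)(m+i-k-j)<(k-j)(m-j)=a_{kk}-a_{kj}$. Uniqueness is what a bare "leading-degree" heuristic misses: it guarantees that in the permutation expansion of $\det(X^{\tau_i}_j)_{1\leq i,j\leq m}$ the top-degree term comes from the diagonal product alone, so it cannot be cancelled by other permutations, and the determinant is a nonzero polynomial of degree $\sum_i a_{ii}$. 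Supplying this lemma (or an equivalent substitute) is what your proposal still needs to be complete.
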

\subsection*{The proof strategy}
Since the lattice of submodules of $M^T$ is self-dual, $X_j^\tau(q)=X_{2m-j}^\tau(q)$. Therefore the right hand side of (\ref{eq:sigma-tau}) can be rewritten in terms of $X^\tau_0(q),\dotsc,X^\tau_m(q)$, bringing it to the form \eqref{eq:ajxj}.

Suppose $\tau_0,\dotsc,\tau_m$ are similarity class types of size $2m$ such that the determinant $(X^{\tau_i}_j(q))_{0\leq i,j\leq m}$ is non-zero.
Then the system of equations
\begin{displaymath}
  \sigma^{\tau_i}(q) = \sum_{j=0}^m p_j(q)X^{\tau_i}_j(q), \quad i=0,\dotsc,m
\end{displaymath}
has a unique solution for the $p_j(q)$.
Thus, if we prove \eqref{eq:sigma-tau} for $\tau=\tau_0,\dotsc,\tau_m$, we will have shown that Theorem~\ref{th:main} holds in general.

Take $\tau_0=\{(2m,(1))\}$, the type of a simple matrix (a matrix with irreducible characteristic polynomial), and for $i=1,\dotsc,m$, take $\tau_i=\{(1,(1^{m+i})),(m-i,(1))\}$.
The proof of Theorem~\ref{th:main} is reduced to the following steps:

\begin{claim}
  \label{claim:1}
  The formula \eqref{eq:sigma-tau} holds for $\tau=\tau_0,\dotsc,\tau_m$.
\end{claim}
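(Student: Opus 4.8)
The plan is to verify \eqref{eq:sigma-tau} for each of the $m+1$ types $\tau_0,\dots,\tau_m$ individually, computing the splitting-subspace count $\sigma^{\tau}(q)$ on the left directly and evaluating the alternating sum on the right in closed form. The types fall into two groups: the reducible types $\tau_1,\dots,\tau_m$, for which both sides turn out to be $0$, and the irreducible type $\tau_0$, which carries the only nonzero count.

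For $i=1,\dots,m$ I would argue as follows. A matrix $T$ of type $\tau_i$ is similar to $aI_{m+i}\oplus S$ with $S$ of size $m-i$. Because $aW\subseteq W$, we have $W+TW=W+(T-aI)W$ for every subspace $W$, so the $T$-splitting subspaces coincide with the $(T-aI)$-splitting subspaces; but $T-aI$ kills an $(m+i)$-dimensional subspace, whence $\mathrm{rank}(T-aI)\le m-i$ and $\dim\bigl(W+(T-aI)W\bigr)\le m+(m-i)<2m$ for every $m$-dimensional $W$. Thus $\sigma^{\tau_i}(q)=0$. To see that the right-hand side of \eqref{eq:sigma-tau} also vanishes, I would insert the values $X^{\tau_i}_j(q)=\qbin{m+i}{j}+\qbin{m+i}{j-m+i}$ from \eqref{eq:taui} and use $\binom{m-j+1}{2}=\binom{m+1}{2}+\binom{j}{2}-jm$ to rewrite $q^{\binom{m-j+1}{2}}=q^{\binom{m+1}{2}}q^{\binom{j}{2}}(q^{-m})^{j}$. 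Each of the two resulting sums then matches the finite $q$-binomial theorem $\sum_j(-1)^j\qbin{N}{j}q^{\binom{j}{2}}z^j=\prod_{k=0}^{N-1}(1-zq^k)$ with $N=m+i$: the first becomes a multiple of $\prod_{k=0}^{m+i-1}(1-q^{k-m})$ and the second, after the substitution $l=j-m+i$, a multiple of $\prod_{k=0}^{m+i-1}(1-q^{k-i})$. Both products contain a vanishing factor—at $k=m$ and at $k=i$, each in range since $i\ge1$ and $m\ge1$—so the right-hand side is $0$ as required.

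It then remains to treat the irreducible type $\tau_0=\{(2m,(1))\}$. Here $M_T$ is a simple $\Fq[t]$-module, so the only invariant subspaces are $0$ and $\Fq^{2m}$, giving $X^{\tau_0}_j(q)=1$ for $j\in\{0,2m\}$ and $0$ otherwise. Using $\binom{m+1}{2}+\binom{m}{2}=m^2$ and $\binom{1-m}{2}=\binom{m}{2}$, the right-hand side of \eqref{eq:sigma-tau} collapses to $q^{m^2}+q^{m^2-m}=q^{m^2-m}(q^m+1)$. I would match this against a direct count: identifying $\Fq^{2m}$ with $\mathbf F_{q^{2m}}$ and $T$ with multiplication by a generator $\alpha$, the number of $m$-dimensional splitting subspaces equals the number of ordered tuples $(w_1,\dots,w_m)$ for which $w_1,\dots,w_m,\alpha w_1,\dots,\alpha w_m$ is a basis, divided by $|GL_m(\Fq)|$. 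This count is $q^{m^2-m}(q^m+1)$, which one can obtain either from the known formula for the irreducible case \cite{MR4263652,sscffa,m2} or by an induction that builds the tuple one vector at a time, tracking the subspaces $(\alpha-c)^{-1}V_k$ and $V_k\cap\alpha^{-1}V_k$ that control the admissible extensions.

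The scalar-shift reduction and the two $q$-binomial evaluations are routine, so the real work sits in $\tau_0$. In the inductive frame count the number of admissible extensions at stage $k$ is governed by $|V_k\cap\alpha^{-1}V_k|$, and its constancy over all admissible partial tuples—transparent for $k\le1$ but not evident for larger $k$—is exactly what makes the irreducible count delicate. This is precisely the content of the established splitting-subspace results for the irreducible case, so invoking them is the cleanest way to finish; the remainder of the claim is then bookkeeping with the identities above.
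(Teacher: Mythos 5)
Your proposal is correct and follows the paper's overall strategy: verify \eqref{eq:sigma-tau} case by case, citing the known count $q^{\binom m2}\bigl(q^{\binom{m+1}2}+q^{\binom m2}\bigr)=q^{m^2-m}(q^m+1)$ for the irreducible type $\tau_0$, and showing that both sides vanish for $\tau_1,\dotsc,\tau_m$. Your two $q$-binomial evaluations of the right-hand side --- pulling out the prefactor to get a product containing the factor $1-q^{k-m}$ at $k=m$, and, after the substitution $l=j-m+i$, a product containing $1-q^{k-i}$ at $k=i$ --- are precisely the content of the paper's Lemma~\ref{lemma:vanishing-q_bins}, just organized as two separate substitutions instead of a single lemma with a shift parameter $k\in\{0,\dotsc,m-i\}$ applied at $k=0$ and $k=m-i$. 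The one step where you take a genuinely different route is the proof that $\sigma^{\tau_i}(q)=0$ for $i\geq 1$: the paper proves a module-theoretic lemma (the projection of a splitting subspace $W$ onto the $p$-primary component must generate that component, forcing $\dim W\geq l(c_T(p))$, and $l(c_T(t-a))=m+i>m$ for type $\tau_i$), whereas you use the scalar-shift identity $W+TW=W+(T-aI)W$ together with $\mathrm{rank}(T-aI)\leq m-i$ to get $\dim(W+TW)\leq 2m-i<2m$. Your argument is more elementary --- a pure rank bound, no primary decomposition --- but it exploits the fact that the large primary component of $\tau_i$ is scalar; the paper's lemma is stronger, ruling out $m$-dimensional splitting subspaces whenever \emph{any} primary component has more than $m$ parts. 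Either suffices here, so your proof of Claim~\ref{claim:1} is complete as written, with the $\tau_0$ count delegated to the literature exactly as the paper delegates it to \cite{MR4263652}.
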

\begin{claim}
  \label{claim:2}
  The determinant of $X=(X^{\tau_i}_j(q))_{0\leq i,j\leq m}$ is non-zero.
\end{claim}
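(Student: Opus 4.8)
The plan is to reduce the $(m+1)\times(m+1)$ determinant to an $m\times m$ one and then show the latter is a nonzero polynomial in $q$ by isolating its leading coefficient. The starting observation is that a matrix of type $\tau_0=\{(2m,(1))\}$ has irreducible characteristic polynomial, so $M_{\tau_0}\cong\mathbf F_{q^{2m}}$ is a field and its only submodules are $0$ and the whole space; hence $X^{\tau_0}_0(q)=1$ and $X^{\tau_0}_j(q)=0$ for $1\le j\le m$. Thus the top row of $X$ is $(1,0,\dots,0)$, and cofactor expansion along it gives $\det X=\det Y$, where $Y=(X^{\tau_i}_j(q))_{1\le i,j\le m}$.

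Next I would compute the degree in $q$ of each entry of $Y$. For $1\le i\le m-1$, Example~\ref{example:taui} gives $Y_{ij}=\qbin{m+i}{j}+\qbin{m+i}{j-m+i}$, while for $i=m$ the type degenerates to the scalar matrix $\{(1,(1^{2m}))\}$, all of whose subspaces are invariant, so $Y_{mj}=\qbin{2m}{j}$. Since $\qbin{n}{k}$ is monic of degree $k(n-k)$ and both Gaussian binomials have nonnegative coefficients, the degree of $Y_{ij}$ is the larger of the two term-degrees (the first term $\qbin{m+i}{j}$ is always present because $1\le j\le m\le m+i$). A direct computation gives the difference of the two term-degrees as $2(i-m)(j-m)\ge 0$ on the range $1\le i,j\le m$, so the first term dominates and $\deg_q Y_{ij}=j(m+i-j)$ for every entry. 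The leading coefficient is $1$ except when $i<m$ and $j=m$ (where the two terms tie, giving $2$); in particular every diagonal entry is monic, since $j=i<m$ for $i<m$ and $Y_{mm}=\qbin{2m}{m}$.

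To finish I would identify the permutation of maximal total degree. Writing the degree of the term indexed by a permutation $\sigma$ as $\sum_i\sigma(i)\bigl(m+i-\sigma(i)\bigr)=\sum_i\sigma(i)(m-\sigma(i))+\sum_i i\,\sigma(i)$, the first sum equals $\sum_{k=1}^m k(m-k)$ independently of $\sigma$, so maximizing the degree is the same as maximizing $\sum_i i\,\sigma(i)$. By the rearrangement inequality for the strictly increasing sequence $(1,\dots,m)$, the unique maximizer is $\sigma=\mathrm{id}$. Hence the top-degree coefficient of $\det Y$ equals $\mathrm{sgn}(\mathrm{id})\prod_{i=1}^m(\text{leading coefficient of }Y_{ii})=1$, so $\det X=\det Y$ is a nonzero polynomial in $q$, as required (and hence nonzero for all but finitely many prime powers $q$, which is exactly what the solvability argument preceding Theorem~\ref{th:main} needs). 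I expect the bookkeeping in the middle step — both the sign computation $2(i-m)(j-m)$ showing the first binomial always dominates, and the reduction of the permutation optimization to the rearrangement inequality after separating off the $\sigma$-independent part $\sum_k k(m-k)$ — to be the main obstacle; the degenerate case $i=m$ merely requires replacing the Example~\ref{example:taui} formula by the single binomial $\qbin{2m}{j}$, but this only alters off-diagonal leading coefficients in the last row and leaves the monic diagonal untouched, so the conclusion is unaffected.
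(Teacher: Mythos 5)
Your proof is correct, and its skeleton matches the paper's: reduce to the $m\times m$ minor using the fact that the $\tau_0$ row is $(1,0,\dotsc,0)$, compute $\deg X^{\tau_i}_j = j(m+i-j)$ from \eqref{eq:taui} (your sign computation $2(i-m)(j-m)\geq 0$ is exactly the paper's $2(m-i)(m-j)\geq 0$), show the identity permutation is the \emph{unique} degree-maximizer in the determinant expansion, and conclude the top-degree coefficient survives. Where you genuinely diverge is in how that uniqueness is established: the paper proves a standalone exchange lemma (Lemma~\ref{lem:maxperm}) for any real matrix $(a_{ij})$ satisfying the strict Monge-type condition $a_{ik}-a_{ij}<a_{kk}-a_{kj}$ for $i,j<k$, and then verifies this condition for $a_{ij}=j(m+i-j)$; you instead exploit the specific quadratic form of the degrees, writing $\sum_i \sigma(i)(m+i-\sigma(i)) = \sum_k k(m-k) + \sum_i i\sigma(i)$ and invoking the rearrangement inequality (with its uniqueness clause for strictly increasing sequences) on $\sum_i i\sigma(i)$. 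Your route is shorter and arguably cleaner for this particular matrix, at the cost of being less reusable; the paper's lemma is a general-purpose tool whose proof is itself the same exchange argument hidden inside the rearrangement inequality. Two further refinements of yours are sound but not needed: you pin down the leading coefficient of $\det Y$ as exactly $1$ (the paper only needs that all entries have positive leading coefficients, so that the identity term cannot vanish), and you treat the row $i=m$ via the scalar type $\{(1,(1^{2m}))\}$, giving $\qbin{2m}{j}$, whereas formula \eqref{eq:taui} taken literally at $i=m$ gives $2\qbin{2m}{j}$; since this discrepancy is a row scaling by $2$, it affects neither degree nor non-singularity, but your reading is the more careful one.
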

\subsection*{Proof of Claim~\ref{claim:1}}
Consider first $\tau=\tau_0$.
It is shown in \cite[Theorem 1.4]{MR4263652} that
\begin{displaymath}
  \sigma_{\tau_0}(q) = q^{\binom m2}(q^{\binom{m+1}2}+q^{\binom m2}).
\end{displaymath}
On the  other hand, $\tau_0$ is the type of a simple matrix, so $X^\tau_j(q) = 1$ if $j=0$ or $2m$, and $X^\tau_j(q)=0$ for $0<j<2m$.
Therefore
\begin{displaymath}
  \sum_{j=0}^{2m} (-1)^jX_j^\tau(q)q^{\binom{m-j+1}2} = q^{\binom m2}(q^{\binom{m+1}2} + q^{\binom{-m+1}2}) = q^{\binom m2}(q^{\binom{m+1}2} + q^{\binom m2}),
\end{displaymath}
establishing \eqref{eq:sigma-tau} for $\tau_0$.

For $i=1,\dotsc,m$, $\sigma^{\tau_i}(q) = 0$ since any $T\in M_n(\Fq)$  of type $\tau_i$ satisfies the hypothesis of the following general lemma.
\begin{lemma}
  Let $l(\lambda)$ denote the number of parts of an integer partition $\lambda$.
  If $W\subset \Fq^n$ is such that $\sum_{j\geq 0} T^jW=\Fq^n$, then $\dim W\geq l(c_T(p))$ for all $p\in \Irr\Fq[t]$.
  In particular, if $T\in M_{2m}(\Fq)$ is such that $l(c_T(p))>m$ for some $p\in \Irr\Fq[t]$, then $T$ does not admit an $m$-dimensional splitting subspace. 
\end{lemma}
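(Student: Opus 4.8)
The plan is to interpret the hypothesis module-theoretically and reduce the inequality to a statement about generators of the $\Fq[t]$-module $M_T$. First I would observe that $\sum_{j\geq 0}T^jW$ is precisely the $\Fq[t]$-submodule of $M_T$ generated by $W$; hence the hypothesis $\sum_{j\geq 0}T^jW=\Fq^n$ says exactly that any $\Fq$-basis $w_1,\dotsc,w_d$ of $W$, with $d=\dim W$, generates $M_T$ as an $\Fq[t]$-module. The target inequality $\dim W\geq l(c_T(p))$ is therefore equivalent to the assertion that $M_T$ cannot be generated by fewer than $l(c_T(p))$ elements, for each $p\in\Irr\Fq[t]$.

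To make this bound explicit, and to avoid appealing to general structure theory, I would pass to the residue field at $p$. Fix $p\in\Irr\Fq[t]$ and set $\bar M=M_T/p(T)M_T$. Since $p(t)$ annihilates $\bar M$, it is a module over the field $\Fq[t]/(p(t))\cong\mathbf F_{q^{\deg p}}$, i.e.\ an $\mathbf F_{q^{\deg p}}$-vector space. Using the primary decomposition \eqref{eq:primary} and the local structure \eqref{eq:local_jcf}, I would compute its dimension: multiplication by $p(t)$ is invertible on each primary component $M_{T_{p'}}$ with $p'\neq p$ (as $p(t)$ is a unit modulo any power of $p'(t)$), while on $M_{T_p}$ it sends each cyclic summand $\Fq[t]/(p(t)^{c_T(p)_i})$ onto $p(t)\,\Fq[t]/(p(t)^{c_T(p)_i})$. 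Hence $\bar M\cong\bigoplus_i\Fq[t]/(p(t))$ with one summand per part of $c_T(p)$, so $\dim_{\mathbf F_{q^{\deg p}}}\bar M=l(c_T(p))$.

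The final step is a spanning count in $\bar M$. Applying the quotient map $M_T\to\bar M$ to the identity $\sum_{j\geq 0}T^jW=\Fq^n$ shows that the $\Fq$-span of $\{\,\bar T^j\bar w_i\,\}$ is all of $\bar M$, where $\bar T$ denotes the action of $t$ and $\bar w_i$ the image of $w_i$. But $t$ acts on $\bar M$ as the scalar $\bar t\in\mathbf F_{q^{\deg p}}$, so $\bar T^j\bar w_i=\bar t^{\,j}\bar w_i$ is merely a scalar multiple of $\bar w_i$; consequently $\bar w_1,\dotsc,\bar w_d$ already span $\bar M$ over $\mathbf F_{q^{\deg p}}$. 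Therefore $d\geq\dim_{\mathbf F_{q^{\deg p}}}\bar M=l(c_T(p))$, which is the main inequality. The ``in particular'' clause then follows at once: an $m$-dimensional splitting subspace $W$ of $\Fq^{2m}$ satisfies $W+TW=\Fq^{2m}$ and hence $\sum_{j\geq 0}T^jW=\Fq^{2m}$, forcing $m=\dim W\geq l(c_T(p))$ for every $p$, which contradicts $l(c_T(p))>m$.

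I expect the one point requiring care to be the dimension computation for $\bar M$, and specifically the observation that $p(t)$ acts invertibly on the off-$p$ primary components so that they drop out entirely. Once that is in hand, the passage to the residue field is exactly what converts the module-generation bound into an elementary linear-algebra dimension count, after which the scalar action of $t$ and the spanning argument are purely formal.
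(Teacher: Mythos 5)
Your proof is correct, but it takes a different---and more self-contained---route than the paper's. The paper projects $W$ onto the $p$-primary component via the projection $\Pi_p\colon M_T\to M_{T_p}$ attached to \eqref{eq:primary}: since $\Pi_p$ commutes with $T$, the image $\Pi_p(W)$ generates $M_{T_p}$ as an $\Fq[t]$-module, and the paper then cites the structure-theoretic fact that $M_{T_p}$, having rank $l(c_T(p))$, cannot be generated by fewer than $l(c_T(p))$ elements, whence $\dim W\geq \dim\Pi_p(W)\geq l(c_T(p))$. You instead pass to the fiber $\bar M=M_T/p(T)M_T$ and prove the generator bound from scratch: the off-$p$ primary components vanish in $\bar M$ because $p(t)$ acts invertibly on them, each cyclic summand $\Fq[t]/(p(t)^{c_T(p)_i})$ contributes one copy of the residue field $\Fq[t]/(p(t))$, and the scalar action of $t$ on $\bar M$ turns $\Fq[t]$-module generation into ordinary spanning over $\mathbf F_{q^{\deg p}}$. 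In effect you have supplied a proof---a Nakayama-style reduction to the residue field---of exactly the fact the paper invokes as a black box. The trade-off: the paper's argument is shorter, while yours is elementary and self-contained, requiring nothing beyond the decompositions \eqref{eq:primary} and \eqref{eq:local_jcf}. Your handling of the ``in particular'' clause matches the intended reading ($W+TW=\Fq^{2m}$ implies $\sum_{j\geq 0}T^jW=\Fq^{2m}$), and the step you flagged as delicate---invertibility of $p(t)$ on the off-$p$ components---is indeed fine, since $p(t)$ is a unit modulo any power of a distinct irreducible.
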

\begin{proof}
  Let $\Pi_p:M_T\to M_{T_p}$ denote the projection map with respect to the primary decomposition \eqref{eq:primary}.
  Since $\Pi_p$ commutes with $T$, $\sum_{j\geq 0} T^j\Pi_p(W) = \Pi_p(\Fq^n)=M_{T_p}$.
  In other words, $\Pi_p(W)$ generates $M_{T_p}$.
  The $\Fq[t]$-module $M_{T_p}$ has rank $l(c_T(p))$, so any generating set must have at least $l(c_T(p))$ elements.
  Therefore, $\dim W\geq \dim \Pi_p(W)\geq l(c_T(p))$.
\end{proof}
Now $X^{\tau_i}_j(q)$ is given by \eqref{eq:taui}.
Therefore, in order to establish \eqref{eq:sigma-tau} for $\tau=\tau_i$, $i=1,\dotsc,m$, it suffices to prove the following result.
\begin{lemma}
  \label{lemma:vanishing-q_bins}
  For each positive integer $m$, $1\leq i\leq m$, and $0\leq k\leq m-i$,
  \begin{displaymath}
    \sum_{j=0}^{2m} (-1)^j \qbin{m+i}{j-k} q^{\binom{m-j+1}2}=0.
  \end{displaymath}
\end{lemma}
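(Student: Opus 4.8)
The plan is to reduce the alternating sum to a single application of the finite $q$-binomial theorem. First I would reindex. Since $\qbin{m+i}{j-k}$ vanishes unless $k\le j\le m+i+k$, and the hypothesis $k\le m-i$ guarantees $m+i+k\le 2m$, the summation range $j=0,\dots,2m$ may be replaced by its effective range without changing the sum. Substituting $l=j-k$ and writing $a=m-k$ then turns the sum into
\[
(-1)^k\sum_{l=0}^{m+i}(-1)^l\qbin{m+i}{l}q^{\binom{a-l+1}2},
\]
so, the prefactor $(-1)^k$ being a nonzero constant, it suffices to show the inner sum vanishes.

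Next I would linearise the exponent using the elementary identity $\binom{a-l+1}2=\binom{a+1}2+\binom l2-al$, which separates the $l$-dependence and rewrites $q^{\binom{a-l+1}2}=q^{\binom{a+1}2}q^{\binom l2}(q^{-a})^l$. Factoring out the constant $q^{\binom{a+1}2}$, the inner sum becomes
\[
q^{\binom{a+1}2}\sum_{l=0}^{m+i}(-1)^l\qbin{m+i}{l}q^{\binom l2}(q^{-a})^l.
\]

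The key step is then to invoke the finite $q$-binomial theorem $\sum_{l=0}^{N}(-1)^l\qbin Nl q^{\binom l2}x^l=\prod_{r=0}^{N-1}(1-q^r x)$ with $N=m+i$ and $x=q^{-a}$, giving the product $\prod_{r=0}^{m+i-1}(1-q^{r-a})$. This product has a vanishing factor precisely when $a$ is an integer in $\{0,1,\dots,m+i-1\}$, which the hypotheses supply: since $i\ge1$ and $0\le k\le m-i$, we have $i\le a=m-k\le m\le m+i-1$, so the factor indexed by $r=a$ equals $1-q^0=0$. Hence the product, and therefore the original sum, is zero.

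The main obstacle I anticipate is bookkeeping rather than conceptual depth: getting the reindexing and the exponent identity exactly right, and checking that the two constraints are each doing real work. The condition $k\le m-i$ serves a dual purpose—it confines the summation to $[0,2m]$ and keeps $a=m-k\ge i\ge1>0$—while $i\ge1$ forces $m\le m+i-1$, so that $a\le m$ still lands within the product's index range. Verifying that $a$ falls strictly inside $\{0,\dots,m+i-1\}$ is exactly the point at which a single factor degenerates, and this is where I would take the most care.
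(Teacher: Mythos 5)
Your proof is correct and follows essentially the same route as the paper: both arguments reindex by $j-k$, apply the finite $q$-binomial theorem at a specialization ($x=q^{-(m-k)}$ in your notation, $x=-q^{k-m}$ in the paper's) chosen so that one factor of the product $\prod_r(1\pm q^r x)$ vanishes, reconcile the exponent $\binom{m-j+1}{2}$ with $\binom{j-k}{2}-(m-k)(j-k)$ up to an additive constant, and use the vanishing of out-of-range $q$-binomials together with $k\le m-i$ to extend the summation to $0\le j\le 2m$. The only difference is cosmetic ordering—you normalize the exponent before invoking the theorem, the paper substitutes first and matches exponents afterward.
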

\begin{proof}
  In the $q$-binomial theorem
  \begin{displaymath}
    \sum_{j=0}^n \qbin nj q^{\binom j2}x^j  = \prod_{j=0}^{n-1}(1+q^jx),
  \end{displaymath}
  set $n=m+i$, $x=-q^{k-m}$, and change the index of summation from $j$ to $j+k$ to get
  \begin{equation}
    \label{eq:substituted}
    (-1)^k\sum_{j=k}^{m+i+k}(-1)^j\qbin{m+i}{j-k}q^{\binom{j-k}2+(k-m)(j-k)}=0.
  \end{equation}
  Observe that
  \begin{gather*}
    \binom{m-j+1}2 = [m(m+1) + j(j-1)-2mj]/2,\text{ whereas }\\
    \binom{j-k}2+(k-m)(j-k) = [k(k+1-2(k-m))+j(j-1)-2mj]/2.
  \end{gather*}
  These two expressions differ by a quantity independent of $j$.
  Therefore replacing $q^{\binom{j-k}2+(k-m)(j-k)}$ by $q^{\binom{m-j+1}2}$ in \eqref{eq:substituted} amounts to multiplication by a non-zero factor that is independent of $j$.
  Thus we have
  \begin{displaymath}
    \sum_{j=k}^{m+i+k}(-1)^j\qbin{m+i}{j-k}q^{\binom{m-j+1}2}=0.
  \end{displaymath}
  The sum remains unchanged when its range is extended to $0\leq j\leq 2m$, proving the identity in the lemma.
\end{proof}
\subsection*{Proof of Claim~\ref{claim:2}}
The non-singularity of $X=(X^{\tau_i}_j(q))_{0\leq i,j\leq m}$ is proved using inequalities satisfied by the degrees of its entries.
\begin{lemma}
  \label{lem:maxperm}
Let $(a_{ij})_{n\times n}$ be a real matrix such that whenever $i<k$ and $j<k$,
  $$
a_{ i k}-a_{ij}< a_{kk}-a_{kj}.
$$
Then the sum $S(\sigma)=\sum_{1\leq i\leq n}a_{i\sigma(i)}$ attains its maximum value precisely when $\sigma$ is the identity permutation.
\end{lemma}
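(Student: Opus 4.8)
The hypothesis rewrites, for $i<k$ and $j<k$, as the strict exchange inequality $a_{ij}+a_{kk}>a_{ik}+a_{kj}$. The plan is to show that any permutation $\sigma$ other than the identity can be strictly improved, so that the identity is the unique maximiser. I would argue by a descent on the largest moved point.

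Let $\sigma\neq\mathrm{id}$ and let $k$ be the largest index with $\sigma(k)\neq k$. Since $\sigma$ fixes every index exceeding $k$, it restricts to a permutation of $\{1,\dots,k\}$; in particular both $j:=\sigma(k)$ and $i:=\sigma^{-1}(k)$ lie in $\{1,\dots,k\}$, and since $\sigma(k)\neq k$ we have $j<k$ and $i<k$. Now modify $\sigma$ to $\sigma'$ by setting $\sigma'(i)=j$, $\sigma'(k)=k$, and $\sigma'=\sigma$ elsewhere. Only rows $i$ and $k$ change, so
\[
S(\sigma')-S(\sigma)=\bigl(a_{ij}+a_{kk}\bigr)-\bigl(a_{ik}+a_{kj}\bigr)>0
\]
by the exchange inequality applied with $i,j<k$. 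Thus $\sigma'$ strictly increases the sum while fixing $k$ and every larger index, so its largest moved point is strictly smaller than that of $\sigma$.

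To finish, I would iterate this construction: each step strictly increases $S$ and strictly decreases the largest moved point, so after finitely many steps we reach the identity, with $S$ having strictly increased overall. Equivalently, one can phrase the same idea as an induction on $n$ (or on the largest moved point), the base case being the permutation that fixes every index. Either way it follows that $S(\sigma)<S(\mathrm{id})$ for every $\sigma\neq\mathrm{id}$, which is exactly the assertion that the maximum is attained precisely at the identity.

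There is no genuine obstacle here; the only point requiring care is the observation that $\sigma$ restricts to a permutation of the initial segment $\{1,\dots,k\}$, since this is what forces both indices $i$ and $j$ produced by the exchange to lie strictly below $k$, which in turn is what makes the hypothesis applicable. Once that is in place the exchange step is forced and the descent is automatic.
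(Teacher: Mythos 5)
Your proof is correct and uses essentially the same idea as the paper: the exchange inequality $a_{ij}+a_{kk}>a_{ik}+a_{kj}$ applied to un-cross the permutation at its top index, followed by iteration. The only cosmetic difference is that the paper starts from a maximiser and derives a contradiction (then recurses on leading principal submatrices), whereas you directly improve any non-identity permutation by a descent on the largest moved point; the swap itself is identical.
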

\begin{proof}
  Let $\sigma $ be a permutation for which the sum $S(\sigma)$ is maximised. We claim that $\sigma(n)=n$. Suppose, to the contrary, that $\sigma(n)=s\neq n.$ Let $r=\sigma^{-1}(n)$. Now
  \begin{align*}
    \sum_{1\leq i\leq n}a_{i\sigma(i)}&=\sum_{i\notin \{r,n\}}a_{i\sigma(i)}+a_{rn}+a_{ns}\\
    &<\sum_{i\notin \{r,n\}}a_{i\sigma(i)}+a_{rs}+a_{nn}
  \end{align*}
  by the hypothesis since $r<n$ and $s<n$. If $\pi$ denotes the permutation which agrees with $\sigma$ whenever $i\notin \{r,n\}$ with $\pi(r)=s$ and $\pi(n)=n$, then it is clear that $S(\sigma)<S(\pi)$, contradicting the maximality of $S(\sigma)$. This proves the claim that $\sigma(n)=n$. Therefore 
  $$
S(\sigma)=a_{nn}+\max_{\pi \in S_{n-1}}S(\pi).
$$
Similar reasoning applied to the leading principal $(n-1) \times (n-1)$ submatrix of $A$ shows that $\sigma(n-1)=n-1$. Continuing this line of reasoning it can be seen that $\sigma(i)=i$ for each $i\leq n$, completing the proof.
\end{proof}
\begin{proposition}
  \label{prop:cofactor} 
  The matrix $X=(X_{j}^{\tau_i})_{0\leq i,j\leq m}$ is non-singular.
  \end{proposition}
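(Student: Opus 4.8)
The plan is to establish non-singularity by a degree argument in $q$: I will show that in the Leibniz expansion of the determinant exactly one permutation contributes the top-degree term, so that this term cannot cancel. First I would remove the row $i=0$. Since $\tau_0$ is the type of a simple matrix, $X_0^{\tau_0}=1$ while $X_j^{\tau_0}=0$ for $1\le j\le m$ (as $0<j<2m$). Expanding along this row gives $\det X=\det\bigl(X_j^{\tau_i}\bigr)_{1\le i,j\le m}$, so it suffices to prove that the $m\times m$ matrix $X'=\bigl(X_j^{\tau_i}\bigr)_{1\le i,j\le m}$ is non-singular.

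Next I would record the degree in $q$ of each entry of $X'$. Since $\deg\qbin nk=k(n-k)$, the two summands of $X_j^{\tau_i}=\qbin{m+i}{j}+\qbin{m+i}{j-m+i}$ in \eqref{eq:taui} have degrees $j(m+i-j)$ and $(j-m+i)(2m-j)$ (the latter only when $j\ge m-i$, else the summand is $0$). Their difference equals $2(m-i)(m-j)$, which is non-negative for $1\le i,j\le m$ and strictly positive unless $i=m$ or $j=m$. Hence the first summand always dictates the degree; as both $q$-binomials have positive leading coefficients, no cancellation occurs, and $a_{ij}:=\deg X_j^{\tau_i}=j(m+i-j)$ for all $1\le i,j\le m$.

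I would then apply Lemma~\ref{lem:maxperm} to the real matrix $(a_{ij})_{1\le i,j\le m}$. For $i<k$ and $j<k$ a direct computation gives
\[
  (a_{kk}-a_{kj})-(a_{ik}-a_{ij})=(k-i)(k-j)>0,
\]
so the hypothesis $a_{ik}-a_{ij}<a_{kk}-a_{kj}$ holds. The lemma then shows that $\sum_i a_{i\sigma(i)}$ is maximised uniquely by the identity permutation. Consequently, in $\det X'=\sum_\sigma\operatorname{sgn}(\sigma)\prod_i X_{\sigma(i)}^{\tau_i}$, the identity term has strictly higher degree in $q$ than every other term, and its leading coefficient---a product of the positive leading coefficients along the diagonal---survives. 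Thus $\det X'\ne 0$, and by the row-$0$ reduction $\det X\ne 0$.

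The step I expect to be the main obstacle is the degree bookkeeping in the second paragraph: one must verify that the first $q$-binomial always dominates (giving the single clean formula $a_{ij}=j(m+i-j)$) and that leading coefficients never cancel, since only then does Lemma~\ref{lem:maxperm} deliver a unique maximising permutation. Granted this, the decisive inequality reduces to the elementary positivity of $(k-i)(k-j)$.
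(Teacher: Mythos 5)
Your proposal is correct and follows essentially the same route as the paper's own proof: reduce to the minor $X'=(X^{\tau_i}_j)_{1\leq i,j\leq m}$ via the unit first row, compute the entry degrees $a_{ij}=j(m+i-j)$ from \eqref{eq:taui}, verify the hypothesis of Lemma~\ref{lem:maxperm}, and conclude that the identity permutation uniquely carries the top-degree term of $\det X'$. Your verification $(a_{kk}-a_{kj})-(a_{ik}-a_{ij})=(k-i)(k-j)>0$ is just an algebraically repackaged form of the paper's chained inequality, and your explicit remark that the positive leading coefficients of the $q$-binomials prevent cancellation when $i=m$ or $j=m$ is a slightly more careful rendering of the paper's use of $\max$.
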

  \begin{proof}
    Since $\tau_0$ is the type of a simple matrix, the first row of $X$ is the unit vector $(1,0,\dotsc,0)$.
    Therefore it suffices to show that the minor $X'=(X^{\tau_i}_j)_{1\leq i,j\leq m}$ is non-singular.
    Let $a_{ij}=\deg X^{\tau_i}_j(q)$.
    Since $\deg \qbin nk = (n-k)k$, by \eqref{eq:taui} we have, for $1\leq i,j\leq m$,
  \begin{displaymath}
   a_{ij}=\max\{j(m+i-j),(j-m+i)(2m-j)\}=j(m+i-j),
 \end{displaymath}
 since $j(m+i-j)-(j-m+i)(2m-j)=2(m-i)(m-j)\geq 0$.
 If $i<k$ and $j<k$,
  \begin{align*}
    a_{ik}-a_{ij}&=k(m+i-k)-j(m+i-j)\\
                 &=(k-j)(m+i-k-j)\\
                 &<(k-j)(m-j)\\
    &=a_{kk}-a_{kj}.
  \end{align*}
  Lemma \ref{lem:maxperm} implies that $\det X'$ has degree $\sum_{i=1}^ma_{ii}>0$ and is thus non-singular.
\end{proof}
\section{Chord Diagrams}
\label{sec:chord-diags}
A \emph{chord diagram} on $n=2m$ nodes refers to one of many visual representations of a fixed-point-free involution on $[2m]$ (see, e.g., \cite[Fig.~2]{MR1336847}).
We arrange $2m$ nodes along the $X$-axis.
A circular arc lying above the $X$-axis is used to connect each node to its image under the involution.
For example, the involution $(1,4)(2,6)(3,5)(7,8)$ is represented by the diagram
\begin{center}
  \begin{tikzpicture}
    [every node/.style={circle,fill=black,inner sep=0pt, minimum size=6pt}]
    \node[label=below:$1$] (1) at (1,0) {};
    \node[label=below:$2$] (2) at (2,0) {};
    \node[label=below:$3$] (3) at (3,0) {};
    \node[label=below:$4$] (4) at (4,0) {};
    \node[label=below:$5$] (5) at (5,0) {};
    \node[label=below:$6$] (6) at (6,0) {};
    \node[label=below:$7$] (7) at (7,0) {};
    \node[label=below:$8$] (8) at (8,0) {};
    \draw[thick,color=teal]
    (1) [out=45, in=135] to  (4);
    \draw[thick,color=teal]
    (2) [out=45, in=135] to  (6);
    \draw[thick,color=teal]
    (3) [out=45, in=135] to  (5);
    \draw[thick,color=teal]
    (7) [out=45, in=135] to  (8);
  \end{tikzpicture}.
\end{center}
The left end of each arc will be called an \emph{opening node}, and the right end a \emph{closing node}.
In the running example, the opening nodes are $1,2,3,7$ and the closing nodes are $4,5,6,8$.
A crossing of the chord diagram is a pair of arcs $(i,j),(k,l)$ such that $i<k<j<l$.
The chord diagram above has two crossings, namely $(1,4),(2,6)$ and $(1,4),(3,5)$.
Given a fixed-point-free involution $\sigma$, let $v(\sigma)$ denote the number of crossings of its chord diagram.
Touchard~\cite{MR46325} studied the polynomials
\begin{displaymath}
  T_m(q) = \sum_\sigma q^{v(\sigma)},
\end{displaymath}
where the sum runs over all fixed-point-free involutions of $[2m]$.

We now describe the contribution to $T_m(q)$ of chord diagrams with a specified set of opening nodes.
\begin{lemma}
  \label{lemma:openings}
  Given $1\leq c_1<\dotsb<c_m\leq 2m$ designated as opening nodes for a chord diagram, and the remaining elements of $[2m]$ designated as closing nodes of a chord diagram, $c_i$ lies to the left of the $j$th closing node if and only if
  \begin{displaymath}
    c_i \leq i+j-1.
  \end{displaymath}
  Consequently, the number of opening nodes that lie to the left of the $j$th closing node is given by
  \begin{equation}
    \label{eq:rj}
    r_j:= \#\{i\in [m]\mid c_i\leq j+i-1\}.
  \end{equation}
\end{lemma}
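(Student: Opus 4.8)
The plan is to prove both assertions by a direct counting argument that rests on a single observation: how many closing nodes lie strictly to the left of a given opening node $c_i$. First I would count the opening nodes among the positions $1,2,\dots,c_i$. Since $c_1<c_2<\dots<c_i\le c_i<c_{i+1}$, exactly $i$ of these $c_i$ positions are opening nodes, so exactly $c_i-i$ of them are closing nodes. As $c_i$ is itself an opening node, all $c_i-i$ of these closing nodes occur at positions strictly less than $c_i$; that is, precisely $c_i-i$ closing nodes lie to the left of $c_i$.

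With this count in hand, the first assertion is immediate. The node $c_i$ lies to the left of the $j$th closing node exactly when fewer than $j$ closing nodes precede $c_i$, i.e.\ when $c_i-i\le j-1$, which rearranges to $c_i\le i+j-1$. The second assertion then follows at once: the opening nodes lying to the left of the $j$th closing node are exactly those $c_i$ for which this inequality holds, so their number is $\#\{i\in[m]\mid c_i\le i+j-1\}=r_j$.

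I expect no genuine difficulty here; the entire content is careful bookkeeping. The one place demanding attention is the handling of strict versus non-strict inequalities --- ensuring that ``to the left of the $j$th closing node'' translates to strictly fewer than $j$ closing nodes preceding $c_i$, and that the opening node $c_i$ is correctly excluded from the count of closing nodes to its left. Getting these off-by-one conventions right is what makes the two displayed inequalities match up exactly.
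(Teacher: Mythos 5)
Your proof is correct and follows essentially the same argument as the paper's: both reduce the claim to counting the closing nodes strictly to the left of $c_i$ (namely $c_i - i$, since exactly $i$ of the first $c_i$ positions are opening nodes) and observing that $c_i$ precedes the $j$th closing node exactly when this count is at most $j-1$. Your version merely spells out the bookkeeping that the paper compresses into two sentences.
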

\begin{proof}
  The node $c_i$ lies to the left of the $j$th closing node of $\sigma$ if and only if there are at most $j-1$ closing nodes to the left of $c_i$.
  In other words, the total number of nodes (opening or closing) up to and including $c_i$ is at most $i+j-1$, meaning that $c_i\leq i+j-1$.
\end{proof}
For every non-negative integer $n$, let $[n]_q$ denote the $q$-integer $1+q+\dotsb+q^{n-1}$.
\begin{lemma}
  \label{lemma:Tq-refinement}
  For every non-negative integer $m$, and $1\leq c_1<\dotsb <c_m\leq 2m$,
  \begin{displaymath}
    \sum_{\sigma \text{ has opening nodes } c_1,\dotsc,c_m} q^{v(\sigma)}= \prod_{j=1}^m [r_j-(j-1)]_q,
  \end{displaymath}
  where $r_j$ is given by (\ref{eq:rj}).
 \end{lemma}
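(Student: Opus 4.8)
The plan is to read the chord diagram from left to right and account for crossings locally at each closing node. I would first record the underlying bijection: since the opening nodes $c_1<\dotsb<c_m$ are fixed, a chord diagram with these openings is determined by specifying, at each closing node $d$ processed in increasing order, which of the currently \emph{open} arcs (those already opened but not yet closed) is terminated at $d$. Every such sequence of choices yields a valid fixed-point-free involution with the prescribed opening set, and conversely, so the sum over $\sigma$ becomes a sum over these choice sequences.

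Next I would show that $v(\sigma)$ splits as a sum of local contributions. A crossing is a pair of arcs $(i,j),(k,l)$ with $i<k<j<l$; I would assign it to the earlier of its two closing nodes, namely $j$. At the instant we close an arc $(i,j)$, the crossings assigned to $j$ are exactly the arcs $(k,l)$ open at that instant with $i<k<j<l$, that is, the currently open arcs opened after $i$. Hence if there are $h$ arcs open just before $d$ and we close the $s$-th of them, ordered by opening position, this contributes $h-s$ crossings, ranging over $0,1,\dotsc,h-1$ as $s$ varies over $1,\dotsc,h$. Summing $q^{h-s}$ over the $h$ available choices therefore produces the factor $[h]_q$.

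The key structural point that makes the whole sum factor as a product is that the number $h$ of open arcs just before a given closing node depends only on the positions of the opening and closing nodes, not on the matching: it equals the number of opening nodes to the left minus the number of closing nodes to the left. For the $j$-th closing node the latter count is $j-1$, while by Lemma~\ref{lemma:openings} the former count is exactly $r_j$. Thus $h=r_j-(j-1)$ at the $j$-th closing node, the choices at distinct closing nodes are independent, and the generating function collapses to $\prod_{j=1}^m[r_j-(j-1)]_q$, as claimed.

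The step I expect to require the most care is the bookkeeping that shows each crossing is counted exactly once and with the correct local weight: one must verify that a crossing is assigned to the closing node of its first-closing arc, that this arc is simultaneously the first-opening one (so that crossings correspond precisely to open arcs opened \emph{later}), and that no crossing is missed or double counted. Once this local decomposition is in hand, both the factorization of the $q$-weight and the identification $h=r_j-(j-1)$ via Lemma~\ref{lemma:openings} follow immediately.
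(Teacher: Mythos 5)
Your proposal is correct and follows essentially the same route as the paper's proof: process closing nodes left to right, assign each crossing to its earlier closing node, observe that the choice at the $j$th closing node ranges over the $r_j-(j-1)$ still-open arcs and contributes $0,1,\dotsc,r_j-j$ crossings, and factor the sum into $\prod_{j=1}^m [r_j-(j-1)]_q$. Your more explicit bookkeeping (identifying crossings assigned to a closing node with the currently open arcs opened later) is just a careful spelling-out of the paper's "choices taken from right to left" argument.
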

\begin{proof}
Suppose we wish to construct a chord diagram on $2m$ nodes with opening nodes $c_1<\dotsb<c_m$.
The remaining nodes $d_1<\dotsb<d_m$ are closing nodes.
By Lemma~\ref{lemma:openings}, for each $j\in [m]$, the number of opening nodes to the left of $d_j$ is $r_j$.
Thus there are $r_1$ choices of opening node for the arc ending at $d_1$.
These choices, taken from right to left, will result in $0,1,\dotsc,r_1-1$ crossings with arcs with closing nodes to the right of $d_1$.
Having chosen the node that is joined to $d_1$, the number of opening nodes that are available to $d_2$ is $r_2-1$.
Once again, these choices, taken from right to left, will result in $0,1,\dotsc,r_2-2$ crossings with arcs with closing nodes to the right of $d_2$.
Continuing in this manner, we see that the contribution of arcs with opening nodes $c_1,\dotsc,c_m$ to $T_m(q)$ is $\prod_{j=1}^m [r_j-(j-1)]_q$.
\end{proof}
Summing over all possible sets of opening nodes gives the following result.
\begin{theorem}
  For every non-negative integer $m$,
   \begin{displaymath}
    T_m(q) = \sum_{1\leq c_1<\dotsb <c_m\leq 2m}\: \prod_{j=1}^m [r_j-(j-1)]_q,
  \end{displaymath}
  where $r_j$ is given by (\ref{eq:rj}).
\end{theorem}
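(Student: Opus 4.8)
The plan is to obtain the identity by summing the refinement of Lemma~\ref{lemma:Tq-refinement} over all admissible sets of opening nodes. The key structural observation is that every fixed-point-free involution $\sigma$ of $[2m]$ has a uniquely determined set of opening nodes, namely the $m$ left endpoints of its arcs; listing these in increasing order as $c_1<\dotsb<c_m$ defines a map from involutions to $m$-element subsets of $[2m]$. The fibres of this map partition the set of all fixed-point-free involutions, so the defining sum $T_m(q)=\sum_\sigma q^{v(\sigma)}$ decomposes as a sum over subsets $\{c_1<\dotsb<c_m\}$ of the blockwise sums $\sum_{\sigma\text{ has opening nodes }c_1,\dotsc,c_m}q^{v(\sigma)}$. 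First I would invoke Lemma~\ref{lemma:Tq-refinement} to replace each block sum by $\prod_{j=1}^m[r_j-(j-1)]_q$, with $r_j$ as in (\ref{eq:rj}); summing over all subsets then reproduces the right-hand side of the claimed formula verbatim.

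The one point requiring care is the treatment of those $m$-subsets that cannot arise as the opening-node set of any involution, since for such a subset the corresponding fibre is empty and the contribution must therefore vanish. Writing $d_1<\dotsb<d_m$ for the complementary closing positions, one has $r_j=d_j-j$, so that $r_j-(j-1)=d_j-2j+1$; the subset is realizable exactly when the ballot condition $d_j\geq 2j$, equivalently $r_j-(j-1)\geq 1$, holds for every $j$. I would verify that at the smallest index $j$ for which this fails, the monotonicity $d_j>d_{j-1}\geq 2(j-1)$ together with $d_j\leq 2j-1$ forces $d_j=2j-1$, whence $r_j-(j-1)=0$ and the factor $[0]_q=0$ annihilates the entire product. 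Thus realizable subsets contribute a genuine product of positive $q$-integers matching their nonempty fibre, while non-realizable subsets contribute zero on both sides, so extending the outer sum to \emph{all} $m$-subsets (or, equivalently, restricting it to realizable ones) leaves the total unchanged.

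I do not expect a genuine obstacle here: once Lemma~\ref{lemma:Tq-refinement} is in hand, the theorem is essentially its summed form, and the only substantive bookkeeping is the verification above that the first obstruction to realizability produces a vanishing factor $[0]_q$. Assembling these observations—unique opening sets partition the involutions, the refinement lemma evaluates each block, and non-realizable blocks contribute nothing—yields the stated expression for $T_m(q)$.
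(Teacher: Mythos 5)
Your proposal is correct and matches the paper's proof, which is exactly the one-line observation that summing Lemma~\ref{lemma:Tq-refinement} over all sets of opening nodes yields the theorem. Your additional verification that non-realizable opening sets contribute a vanishing factor $[0]_q$ is sound but already subsumed in the lemma, since it is stated (and proved) for \emph{all} subsets $1\leq c_1<\dotsb<c_m\leq 2m$, with both sides equal to zero in the non-realizable case.
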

\section{The Enumeration of Splitting Subspaces}
\label{sec:enum-splitt-subsp}
The relationship between the enumeration of splitting subspaces and the polynomials $T_m(q)$ was discovered in \cite[Section~4.6]{pr}.
It is a special case of one of the main results \cite[Theorem~4.8]{pr} of that paper. 
The proof in this special case, being relatively simple, is provided here.
\begin{theorem}
  \label{theorem:split-splitting}
  Let $T\in M_{2m}(\Fq)$ be a diagonal matrix with distinct diagonal entries.
  The number of $T$-splitting subspaces in $\Fq^{2m}$ is
  \begin{displaymath}
    \sigma^T = (q-1)^m q^{\binom m2} T_m(q).
  \end{displaymath}
\end{theorem}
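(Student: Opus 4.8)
The plan is to fix the eigenbasis of $T$ and organise the count of $m$-dimensional splitting subspaces according to a combinatorial invariant that will match the opening-node decomposition of $T_m(q)$ supplied by Lemma~\ref{lemma:Tq-refinement}. First I would observe that, since $\dim W=m$, the splitting condition $W+TW=\Fq^{2m}$ is equivalent to $W\cap TW=0$, and hence to the non-vanishing of the determinant of the $2m\times 2m$ matrix $M$ whose first $m$ rows are the rows of $A$ and whose last $m$ rows are the rows of $AT$, where $A$ is the $m\times 2m$ matrix in reduced row echelon form whose rows span $W$, and $AT$ is obtained from $A$ by scaling its $j$th column by $\lambda_j$ (so the rows of $AT$ span $TW$); here $\lambda_1,\dots,\lambda_{2m}$ are the distinct diagonal entries of $T$. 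The pivot columns $c_1<\dots<c_m$ of $A$ range over all $m$-subsets of $[2m]$, and I would designate these as the opening nodes of a chord diagram and the complementary columns $d_1<\dots<d_m$ as the closing nodes. By the structure of the echelon form, the free entries of $A$ in column $d_j$ occupy exactly the rows $i$ with $c_i<d_j$, of which there are $r_j$ by Lemma~\ref{lemma:openings} and \eqref{eq:rj}.

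The heart of the argument is a determinant computation. Using the $m$ pivot columns $\binom{e_i}{\lambda_{c_i}e_i}$ of $M$, whose top block is the identity, to clear the top block of each closing column $\binom{a_j}{\lambda_{d_j}a_j}$, I would reduce $M$, up to sign, to the $m\times m$ matrix $C$ whose $j$th column is $(\lambda_{d_j}I-T_S)a_j$, where $T_S=\mathrm{diag}(\lambda_{c_1},\dots,\lambda_{c_m})$ and $a_j\in\Fq^m$ collects the free entries of column $d_j$. Thus the $(i,j)$ entry of $C$ is $(\lambda_{d_j}-\lambda_{c_i})(a_j)_i$. Since the $c_i$ increase, $a_j$ is supported on the initial segment $\{1,\dots,r_j\}$, so $C$ has a staircase support pattern with weakly increasing column heights $r_1\le\dots\le r_m$. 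Because the eigenvalues are distinct, each factor $\lambda_{d_j}-\lambda_{c_i}$ is nonzero, so replacing the free variable $(a_j)_i$ by $(\lambda_{d_j}-\lambda_{c_i})(a_j)_i$ is a bijection of $\Fq$; consequently the number of splitting $W$ with opening set $\{c_1,\dots,c_m\}$ equals the number of nonsingular $m\times m$ matrices over $\Fq$ whose $j$th column is supported on rows $1,\dots,r_j$.

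This last count is elementary. Choosing the columns from left to right, the span of the first $j-1$ columns already chosen is a $(j-1)$-dimensional subspace of $\Fq^{r_{j-1}}\subseteq\Fq^{r_j}$, so there are $q^{r_j}-q^{j-1}$ admissible choices for the $j$th column, giving $\prod_{j=1}^m(q^{r_j}-q^{j-1})$. Using the identity $q^{r_j}-q^{j-1}=(q-1)q^{j-1}[r_j-(j-1)]_q$ together with $\sum_{j=1}^m(j-1)=\binom m2$, this product equals $(q-1)^m q^{\binom m2}\prod_{j=1}^m[r_j-(j-1)]_q$. Summing over all pivot sets $\{c_1,\dots,c_m\}$ and invoking Lemma~\ref{lemma:Tq-refinement} then yields $\sigma^T=(q-1)^m q^{\binom m2}T_m(q)$.

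I expect the main obstacle to be the bookkeeping in the determinant reduction: one must verify carefully that the free entries of $A$ in each closing column $d_j$ lie precisely in the rows indexed by the openings to the left of $d_j$, an initial segment because the pivots are increasing, so that the reduced staircase matrix $C$ has column heights equal to the $r_j$ of \eqref{eq:rj}. Once this identification is pinned down, the remaining points are routine: confirming that the weakly increasing supports make the greedy column count valid, and checking that the degenerate cases (where some factor $q^{r_j}-q^{j-1}$ vanishes, equivalently $r_j=j-1$) correctly contribute zero on both sides, in agreement with the vanishing of the product in Lemma~\ref{lemma:Tq-refinement} for infeasible opening sets.
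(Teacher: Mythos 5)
Your proposal is correct and takes essentially the same approach as the paper: parametrize subspaces by reduced row echelon form, identify pivots with opening nodes, reduce the splitting condition to the non-singularity of an $m\times m$ staircase matrix whose entries are the free entries scaled by nonzero eigenvalue differences, count such matrices column by column as $\prod_j(q^{r_j}-q^{j-1})$, and sum over pivot sets via Lemma~\ref{lemma:Tq-refinement} (your column-operation reduction is just a cosmetic variant of the paper's coordinate permutation followed by a block row operation). One minor caveat: your intermediate claim that splitting is equivalent to $W\cap TW=0$ fails when $0$ is a diagonal entry of $T$ (then $\dim TW$ can drop), but this is harmless since the equivalence you actually use—splitting if and only if $\det M\neq 0$—holds directly because the row span of $M$ is $W+TW$.
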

Comparison of Theorem~\ref{theorem:split-splitting} with the main theorem gives a new proof of the Touchard-Riordan formula \eqref{eq:touchard-riordan}.
\begin{proof}
  Let $W\subset \Fq^{2m}$ be a $T$-splitting subspace of $\Fq^{2m}$.
  $W$ has a unique ordered basis in reduced row echelon form.
  This is a basis whose elements form the rows of an $m\times 2m$ matrix such that
  \begin{enumerate}
  \item There exist $1\leq c_1<\dotsb < c_m\leq 2m$ (called the pivots of $W$) such that the first non-zero entry of the $i$th row lies in the $c_i$th column, and equals $1$.
  \item \label{item:rowech} The only non-zero entry in the $c_i$th column lies in the $i$th row for $1\leq i\leq m$.
  \end{enumerate}
  For example, when $m=3$, a subspace with pivots $1,2,4$ is spanned by a matrix of the form
  \begin{equation}
    \label{eq:rowech}
    \begin{pmatrix}
      1 & 0 & * & 0 & * & *\\
      0 & 1 & * & 0 & * & *\\
      0 & 0 & 0 & 1 & * & *
    \end{pmatrix},
  \end{equation}
  where each $*$ represents an arbitrary element of $\Fq$.
  
  Suppose that $W$ has reduced row echelon form with pivots $c_1,\dotsc,c_m$.
  By a permutation of coordinates, the pivot columns can be moved to the left to rewrite $A$ in the block form $(I\mid X)$, where $I$ denotes the $m\times m$ identity matrix, and $X\in M_m(\Fq)$.
  The condition \eqref{item:rowech} in the definition of row echelon from imposes the vanishing of certain entries of $X$:
  \begin{displaymath}
    X_{ij} = 0 \text{ if } j< c_i-(i-1).
  \end{displaymath}
  For the matrix in \eqref{eq:rowech}, moving the pivot columns to the left results in the matrix
  \begin{displaymath}
    \left(
      \begin{array}{ccc|ccc}
        1 & 0 & 0 & * & * & *\\
        0 & 1 & 0 & * & * & *\\
        0 & 0 & 1 & 0 & * & *
      \end{array}
    \right).
  \end{displaymath}
  The above permutation of coordinates also permutes the diagonal entries of $T$, but it remains a diagonal matrix with distinct diagonal entries.
  Write this matrix in block diagonal form as
  $\begin{pmatrix}
    T' & 0\\ 0 & T''
  \end{pmatrix}$, where $T'$ and $T''$ are $m\times m$ diagonal matrices.
  
  Now $W$ is a $T$-splitting subspace if and only if the matrix
  \begin{displaymath}
    \begin{pmatrix}
      I & X\\
      T' & XT''
    \end{pmatrix}
  \end{displaymath}
  is non-singular.
  Applying the block row operation $R_2\to R_2-T'R_1$ gives
  \begin{displaymath}
    \begin{pmatrix}
      I & X\\
      0 & XT''- T'X
    \end{pmatrix}
  \end{displaymath}
  Thus $W$ is a splitting subspace if and only if $Y=XT''-T'X$ is non-singular.
  The entries of $Y$ in terms of the entries of $X$ are given by
  \begin{displaymath}
    y_{ij} = (t''_j-t'_i)x_{ij},
  \end{displaymath}
  where $t'_i$ (resp. $t''_i$) is the $i$th diagonal entry of $T'$ (resp. $T''$).
  Since $T'$ and $T''$ have no diagonal entries in common the map $X\mapsto Y$ is a bijection, and an entry of $X$ is non-zero if and only if the corresponding entry of $Y$ is non-zero.
  Thus we have the following result.
  \begin{lemma}
    \label{lemma:counting-matrices}
    The number of $T$-splitting subspaces with pivots\linebreak $c_1,\dotsc,c_m$ is the number of non-singular matrices $Y\in M_m(\Fq)$ such that $Y_{ij}=0$ if $j<c_i-(i-1)$.
  \end{lemma}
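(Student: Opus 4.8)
The plan is to read the lemma directly off the bijection built in the preceding discussion, so the argument is almost entirely bookkeeping. Since the reduced row echelon form of an $m$-dimensional subspace is unique, the $T$-splitting subspaces with pivots $c_1 < \dots < c_m$ are in bijection with their echelon matrices, and the coordinate permutation that moves the pivot columns to the left turns these into matrices $(I \mid X)$ with $X \in M_m(\Fq)$. First I would pin down the zero pattern of $X$. In row $i$ the echelon condition kills every entry in a column to the left of the pivot $c_i$, so after relabelling the non-pivot (closing) columns as $d_1 < \dots < d_m$ the free entries of row $i$ are exactly those with $d_j > c_i$. Counting shows there are $c_i - i$ closing nodes strictly below $c_i$, whence $d_j < c_i$ is equivalent to $j < c_i - (i-1)$; this is precisely the constraint $X_{ij} = 0$ for $j < c_i - (i-1)$.

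Next I would invoke the two facts already established above: $W$ is $T$-splitting if and only if $Y = XT'' - T'X$ is non-singular, and the entrywise map $x_{ij} \mapsto y_{ij} = (t''_j - t'_i) x_{ij}$ is a bijection of $M_m(\Fq)$. Because $T'$ and $T''$ share no diagonal entry, every scalar $t''_j - t'_i$ is non-zero, so this map is support-preserving: $y_{ij} = 0$ if and only if $x_{ij} = 0$. It therefore restricts to a bijection between matrices $X$ obeying $X_{ij} = 0$ for $j < c_i - (i-1)$ and matrices $Y$ obeying the identical constraint, and under it non-singularity of $Y$ matches the splitting property of $W$.

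Combining these observations, the $T$-splitting subspaces with pivots $c_1, \dots, c_m$ correspond bijectively to the non-singular matrices $Y \in M_m(\Fq)$ with $Y_{ij} = 0$ for $j < c_i - (i-1)$, which is exactly the lemma. I do not expect a genuine obstacle: the sole point demanding care is the index arithmetic that identifies the support constraint, namely verifying that the vanishing inherited from the echelon form and the coordinate permutation is precisely $j < c_i - (i-1)$ and that it passes verbatim from $X$ to $Y$.
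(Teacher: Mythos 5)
Your proposal is correct and is essentially identical to the paper's own argument: the paper also derives the lemma by combining the uniqueness of the reduced row echelon form, the zero pattern $X_{ij}=0$ for $j<c_i-(i-1)$, the equivalence of the splitting condition with non-singularity of $Y=XT''-T'X$, and the fact that $x_{ij}\mapsto (t''_j-t'_i)x_{ij}$ is a support-preserving bijection. Your explicit verification of the index arithmetic for the zero pattern is a detail the paper states without derivation, but the route is the same.
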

  It remains to enumerate such matrices.
  The number of potentially non-zero entries in the $j$th column of $Y$ is the number of $i$ such that $c_i\leq i+j-1$, which is precisely the number $r_j$ from Lemma~\ref{lemma:openings}.
  Since $Y$ is non-singular, its first column is non-zero.
  Thus there are $q^{r_1}-1=(q-1)[r_1]_q$ possibilities for the first column of $Y$.
  The second column is independent of the first, giving $q^{r_2}-q=(q-1)q[r_2-1]_q$ possibilities.
  Similarly, given the first $j-1$ columns of $Y$, the number of possibilities for the $j$th column is $q^{r_j}-q^{j-1}=(q-1)q^{j-1}[r_j-(j-1)]_q$.
  Thus the number of matrices $Y$ satisfying the conditions of Lemma~\ref{lemma:counting-matrices} is
  \begin{displaymath}
    (q-1)^mq^{\binom m2}\prod_{j=1}^m[r_j-(j-1)]_q.
  \end{displaymath}
  Adding up the contribution of all possible sets of pivots and using Lemma~\ref{lemma:Tq-refinement} gives Theorem~\ref{theorem:split-splitting}.
\end{proof}
\printbibliography
\end{document}